\numberwithin{equation}{section}
\newtheorem{theorem}[equation]{Theorem}
\newtheorem{proposition}[equation]{Proposition}
\newtheorem{lemma}[equation]{Lemma}
\newtheorem{problem}[equation]{Problem}
\theoremstyle{definition}
\newtheorem{rmk}[equation]{Remark}
\newtheorem{eg}[equation]{Example}
\newtheorem{defn}[equation]{Definition}
\newcommand{\Q}{\mathbb{Q}}
\newcommand{\Z}{\mathbb{Z}}
\newcommand{\R}{\mathbb{R}}
\newcommand{\C}{\mathbb{C}}
\newcommand{\cO}{\mathcal{O}}
\newcommand{\fp}{\mathfrak{p}}
\newcommand{\cA}{\mathcal{A}}
\newcommand{\cH}{\mathcal{H}}
\newcommand{\eps}{\varepsilon}
\DeclareMathOperator{\girth}{girth}
\DeclareMathOperator{\sys}{sys\pi_1}
\DeclareMathOperator{\Char}{char}
\DeclareMathOperator{\SL}{SL}
\DeclareMathOperator{\PSL}{PSL}
\DeclareMathOperator{\PGL}{PGL}
\DeclareMathOperator{\ord}{ord}
\DeclareMathOperator{\im}{im}
\DeclareMathOperator{\Tr}{Tr}
\DeclareMathOperator{\Nm}{N}
\title{A Note on Riemann Surfaces of Large Systole}
\author{Shotaro Makisumi}
\date{\today}
\begin{document}
\maketitle
\abstract{We examine the \emph{large systole problem}, which concerns compact hyperbolic Riemannian surfaces whose systole, the length of the shortest noncontractible loops, grows logarithmically in genus. The generalization of a construction of Buser and Sarnak by Katz, Schaps, and Vishne, which uses principal ``congruence'' subgroups of a fixed cocompact arithmetic Fuchsian, achieves the current maximum known growth constant of $\gamma = \frac{4}{3}$. We prove that this is the best possible value of $\gamma$ for this construction using arithmetic Fuchsians in the congruence case. The final section compares the large systole problem with the analogous large girth problem for regular graphs.}

\section{Introduction}
Since their introduction, Riemann surfaces have proved a fruitful area of investigation bringing together different branches of mathematics. A notion of long-standing geometric interest is a surface's \emph{systole}, the length of its shortest noncontractible loops, or equivalently the length of the shortest closed geodesics. For hyperbolic Riemann surfaces, the lengths of closed geodesics in general have been studied, in part because of their relation to arithmetic. For overviews of various problems concerning closed geodesics of hyperbolic surfaces, see the surveys by Schmutz Schaller \cite{MR1609636} and by Parlier \cite{Par09}.

This paper examines the \emph{large systole problem}, on compact hyperbolic Riemannian surfaces whose systole grows logarithmically in genus. The best known constructions of such surfaces, achieving the growth constant $\gamma = \frac{4}{3}$, have all used arithmetic inputs. Our main result, Theorem \ref{theorem:MainResult}, is the sharpness of $\gamma = \frac{4}{3}$ for surfaces corresponding to principal ``congruence'' subgroups of a fixed cocompact arithmetic Fuchsian in the congruence case. The proof reduces the systole bound to a question about representation by an indefinite ternary quadratic form, which in turn is solved by a local-global approach.

\paragraph{The Large Systole Problem} \label{sec:SystoleProblem}
In this section, we define the large systole problem and discuss some of the known results on Riemann surfaces of large systole. For more details, see Parlier \cite[Section~4.1]{Par09}.

Throughout, a hyperbolic Riemann surface is viewed with its hyperbolic metric as a Riemannian surface. Let $\cH$ be the upper half plane equipped with the hyperbolic metric, on which $\SL_2(\R)$ (or $\PSL_2(\R)$) acts by fractional linear transformations. Let $\Gamma$ be a \emph{Fuchsian group}, that is, a discrete subgroup of $\SL_2(\R)$ (or of $\SL_2(\R)$). Then $\Gamma$ acts properly discontinuously on $\cH$, so that for $\Gamma$ acting on $\cH$ without a fixed point, the quotient $\Gamma\backslash\cH$ has the structure induced from $\cH$ of a (smooth) hyperbolic Riemann surface. Conversely, every hyperbolic Riemann surface (hereafter simply Riemann surface) arises as such a quotient.

\begin{defn}
 Let $X$ be a compact Riemann surface. The \emph{systole} of $X$, denoted $\sys(X)$, is the length of the shortest noncontractible loops of $X$.
\end{defn}
Equivalently, systole is the length of the shortest closed geodesics. It is this characterization that allows an arithmetic study of systole; see Section \ref{sec:Background}.

\begin{rmk}
 The notation $\sys$ denotes, more precisely, \emph{homotopy 1-systole}.

 Non-compact Riemann surfaces have arbitrarily short noncontractible loops around its cusps. For example, let $\Gamma$ be the unipotent group generated by $\left(\begin{smallmatrix}1 & 1 \\ 0 & 1\end{smallmatrix}\right)$, which acts on $\cH$ as horizontal translation by $1$. Then horizontal segments connecting $iy$ and $1 + iy$ become loops around the cusp at infinity in the non-compact quotient $\Gamma\backslash\cH$, with length $1/y \rightarrow 0$ as $y \rightarrow \infty$. For general Riemann surfaces, systole is therefore defined excluding these boundary curves.
\end{rmk}

We make the following definition which, while not standard in the literature, has an established counterpart for regular graphs; see Section \ref{sec:Girth}.
\begin{defn}
 A family of compact Riemann surfaces $X_i$ with $g(X_i) \rightarrow \infty$ is \emph{of large systole} if there exists constants $\gamma > 0$ and $c$, independent of $i$, such that $X_i$ satisfy
 \[
  \sys(X_i) \ge \gamma \log g(X_i) - c.
 \]
\end{defn}
In \cite{MR2780746}, the systole of a random Riemann surface was shown to be unrelated to its genus. In particular, random compact Riemann surfaces do not form families of large systole.
\begin{problem}[The large systole problem (for compact Riemann surfaces)]
 Determine the supremum of $\gamma$ such that there exists a family of compact Riemann surfaces $X_i$ of large systole with this $\gamma$.
\end{problem}
A simple area consideration gives the upper bound $\gamma \le 2$. Buser and Sarnak gave in \cite{MR1269424} the first construction of surfaces of large systole.
\begin{theorem}[Buser and Sarnak \cite{MR1269424}, Section 4]
Let $\Gamma$ be a cocompact Fuchsian in $\SL_2(\R)$ obtained from a quaternion division algebra $D(\frac{a, b}{\Q})$ for positive integers $a, b$. Let $\Gamma(p)$ for odd primes $p$ be the principal ``congruence'' subgroups. Then the family of compact Riemann surfaces $X_p = \Gamma(p)\backslash\cH$ satisfy $g(X_p) \rightarrow \infty$ as $p \rightarrow \infty$, and there exists some $c$, independent of $p$, such that
 \[
  \sys(X_p) \ge \frac{4}{3} \log g(X_p) - c.
 \]
\end{theorem}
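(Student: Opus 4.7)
My plan is to combine a length-versus-trace estimate with an arithmetic lower bound on $|\Tr(\gamma) - 2\eps|$ for elements $\gamma \in \Gamma(p) \setminus \{\pm I\}$, where $\eps \in \{\pm 1\}$ records the sign of the reduction $\gamma \bmod p$.

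First I would settle the genus-to-index conversion. Let $\cO$ denote a maximal order of $D(\frac{a,b}{\Q})$ containing $\Gamma$; via a fixed isomorphism $D \otimes_{\Q} \R \cong M_2(\R)$ the reduced trace and reduced norm agree with the matrix trace and determinant. For an odd prime $p$ at which $D$ is unramified, reduction mod $p$ identifies $\cO/p\cO$ with $M_2(\F_p)$, so $\Gamma/\Gamma(p)$ embeds into $\SL_2(\F_p)$, a group of order $p(p^2-1)$. Strong approximation for the norm-one group $D^1$ makes this embedding surjective for all but finitely many $p$, giving $[\Gamma:\Gamma(p)] = \Theta(p^3)$. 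Gauss--Bonnet then yields $g(X_p) = \Theta(p^3)$, so
\[
 \log g(X_p) = 3\log p + O(1).
\]

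Next, for hyperbolic $\gamma \in \SL_2(\R)$ the translation length $\ell(\gamma)$ satisfies $|\Tr(\gamma)| = 2\cosh(\ell(\gamma)/2)$, whence $\ell(\gamma) \ge 2\log|\Tr(\gamma)| - O(1)$ for $|\Tr(\gamma)|$ large. Since $X_p$ is compact, and since the arithmetic bound below rules out torsion for large $p$, every noncontractible loop is homotopic to a closed geodesic realized by some hyperbolic $\gamma \in \Gamma(p) \setminus \{\pm I\}$, so $\sys(X_p) = \inf \ell(\gamma)$ over such $\gamma$. The arithmetic heart is then short: writing $\gamma = \eps I + p\delta$ with $\delta \in \cO$ nonzero, one has
\[
 \Nm(\gamma - \eps I) = p^2 \Nm(\delta),
\]
while a direct expansion using $\Nm(\gamma) = 1$ gives $\Nm(\gamma - \eps I) = 2 - \eps\Tr(\gamma)$. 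Integrality of the reduced norm on $\cO$ forces $\Nm(\delta) \in \Z \setminus \{0\}$, so $|\Tr(\gamma) - 2\eps| \ge p^2$ and hence $|\Tr(\gamma)| \ge p^2 - 2$. Plugging in,
\[
 \ell(\gamma) \ge 2\log(p^2 - 2) - O(1) = 4\log p - O(1),
\]
which rearranges to $\sys(X_p) \ge \tfrac{4}{3}\log g(X_p) - c$ for a suitable $c$.

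There is no single hard step: the numerator $4$ in $\tfrac{4}{3}$ comes from the exponent $p^2$ in the norm identity, and the denominator $3$ comes from $|\SL_2(\F_p)| \sim p^3$. The bookkeeping I would be most careful about is excluding the finitely many primes dividing the discriminant of $\cO$ and treating the two signs $\eps = \pm 1$ uniformly; neither affects anything beyond the additive constant $c$.
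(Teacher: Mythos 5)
Your proof is correct and follows the standard Buser--Sarnak argument, which is exactly the mechanism the paper sketches (and runs in reverse in Section 3.2, where $x\equiv 1 \bmod q$ plus $\Nm_A(x)=1$ forces $x_0 = nq^2+1$): the index bound $[\Gamma:\Gamma(p)]=\Theta(p^3)$ gives $\log g(X_p)=3\log p+O(1)$, while the reduced-norm identity $\Nm(\gamma-1)=2-\Tr(\gamma)=p^2\Nm(\delta)$ with $\Nm(\delta)$ a nonzero integer (nonzero precisely because $D$ is a division algebra) gives $|\Tr(\gamma)|\gg p^2$ and hence $\sys(X_p)\ge 4\log p-O(1)$. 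The only cosmetic point is that for $\Gamma(p)$ as defined one always has $\eps=+1$ (and $-I\notin\Gamma(p)$ for odd $p$), so the sign bookkeeping is unnecessary, and the finitely many ramified or non-maximal primes are harmlessly absorbed into $c$ as you note.
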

This constant $\gamma = \frac{4}{3}$ remains the best achieved, so that the truth of the large systole problem lies somewhere in $\frac{4}{3} \le \gamma \le 2$. Section \ref{sec:Background} will provide all necessary definitions and describe a generalization to quaternion algebras over number fields by Katz, Schaps, and Vishne \cite{MR2331526}, which also achieves $\gamma = \frac{4}{3}$. Although a complete proof does not seem to be available in the literature, Parlier in \cite[to appear in Chapter~3]{Par09} notes that the Platonic surfaces of Brooks \cite{MR1677565}, which are compactifications of the non-compact Riemann surfaces corresponding to principal congruence subgroups of $\SL_2(\Z)$, also achieve $\gamma = \frac{4}{3}$. All known constructions of surfaces of large systole thus have arithmetic inputs, which is perhaps surprising given the seemingly inherent geometric nature of the problem.

In this direction, Schmutz Schaller has shown in \cite{MR1277506} that hyperbolic Riemann surfaces corresponding to principal congruence subgroups of $\SL_2(\Z)$ have the maximum systole among all surfaces in their respective Teichm\"{u}ller spaces. For compact surfaces, the only known result is in genus 2, where first Jenni \cite{MR749104} and since others have shown that the Bolza curve attains the maximum systole. Schmutz Schaller proved this in \cite[Theorem~5.3]{MR1250756} and also conjectured, with partial results in \cite[Section~8]{MR1250756}, that the so-called $M(3)$ surface attains the maximum systole in genus 3. Both surfaces correspond to arithmetic Fuchsians, the arithmetic triangle groups $(2, 3, 8)$ and $(2, 3, 12)$, respectively.

\paragraph{Contribution}
Jenni's and Schmutz Schaller's results indicate the possible central importance of arithmetic surfaces in the large systole problem, even for compact surfaces. A question that does not appear to have been considered in the literature is an upper bound on systole for arithmetic surfaces, and in particular whether $\gamma = \frac{4}{3}$ is optimal for the generalized Buser-Sarnak construction. This paper aims to address this gap by showing the following.
\begin{theorem}[Main Result (Sharpness of $\gamma = \frac{4}{3}$ for cocompact arithmetic Fuchsians in the congruence case)] \label{theorem:MainResult}
 The constant $\gamma = \frac{4}{3}$ is the best possible for the family of compact Riemann surfaces corresponding to principal ``congruence'' subgroups of a fixed arithmetic subgroup in the congruence case.

 More precisely, let $\Lambda$ be a cocompact arithmetic subgroup of $\SL_2(\R)$. Fix a totally real number field $K \subset \R$ with ring of integers $R$, a quaternion division algebra $A$ over $K$ split at the given embedding and ramified at all other infinite places, an isomorphism $\varphi: A \otimes_K \R \rightarrow M_2(\R)$, and an $R$-order $\cO$ in $A$, such that $\Lambda$ is commensurable with $\Gamma = \Gamma(\cO)$. Assume, moreover, that $\Lambda \cap \Gamma$ is a congruence subgroup of $\Gamma$. Then for any $\gamma > \frac{4}{3}$ and any sequence of ideals $I$ of $R$, the compact Riemann surfaces $X_I = \Lambda(I)\backslash\cH$ eventually satisfy
 \[
  \sys(X_I) < \gamma\log g(X_I).
 \]
\end{theorem}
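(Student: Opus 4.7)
The plan is to reformulate the upper bound as the existence, for all sufficiently large $\Nm(I)$, of a non-trivial $\gamma \in \Lambda(I)$ whose reduced trace $t = \Tr\varphi(\gamma)$ satisfies $|t| \le \Nm(I)^{2+\epsilon}$, and then to produce such a $\gamma$ by solving a representation problem for an indefinite ternary quadratic form. For the reduction, a hyperbolic $\gamma$ with trace $t$ gives a closed geodesic of length $2\cosh^{-1}(|t|/2) = 2\log|t| + O(1)$, while $[\Lambda : \Lambda(I)] \asymp \Nm(I)^{3}$ yields $\log g(X_I) = (3+o(1))\log\Nm(I)$. One must additionally impose $|\sigma(t)| \le 2$ at every infinite place $\sigma \ne \id$, since $\sigma(\gamma)$ lies in the compact group of norm-one elements of $\mathbb{H}$ at each ramified real place; the admissible traces thus lie in a lattice box whose archimedean volume is of order $|t|$.

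To construct such a $\gamma$, write $\gamma = 1 + \beta$; the conditions $\gamma \in 1 + I\cO$ and $\Nm(\gamma) = 1$ become $\beta \in I\cO$, $\Tr(\beta) = t - 2$, $\Nm(\beta) = 2 - t$. The substitution $\beta' = 2\beta - (t-2)$ moves the problem into the space $A^{0}$ of trace-zero quaternions, where it reads: find $\beta' \in A^{0}$ with $\beta' \equiv -(t-2) \pmod{2I\cO}$ and $Q(\beta') = 4 - t^{2}$, where $Q := \Nm|_{A^{0}}$ is the ternary $K$-quadratic form given by the reduced norm restricted to pure quaternions. By direct calculation on trace-zero $2 \times 2$ real matrices, $Q$ is indefinite at the split place of $K$, while at each ramified real place $A_\sigma \cong \mathbb{H}$ and so $Q$ is positive definite. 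The existence of a suitable $\gamma$ for a prescribed $t$ is therefore equivalent to a representation problem for $Q$ with a congruence side condition modulo $2I\cO$.

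I would attack this representation problem by a Hasse-type local-global principle. At finite primes outside a fixed set depending only on $\cO$ and $2$, local representability is automatic by Hensel's lemma on the lattice $\cO^{0} := \cO \cap A^{0}$. At a prime $\fp \mid I$, since $4 - t^{2} = -(t-2)(t+2)$ has $\fp$-valuation at least $v_\fp(I)$ whenever $t \equiv 2 \pmod I$, the norm value and coset shift are automatically compatible, and local solvability reduces to a straightforward Hensel-type check for $Q$ over $R_\fp/\fp^{v_\fp(I)}$. A simple lattice count produces roughly $\Nm(I)^{1+\epsilon}$ candidate traces $t$ in the archimedean box. By Siegel's mass formula for representation numbers of $Q$ (or equivalently strong approximation for the spin group of $Q$), the archimedean local density grows as a positive power of $|t|$, while the non-archimedean densities remain bounded below by constants independent of $I$; this forces a global representation once $|t|$ exceeds a fixed polynomial in $\Nm(I)$. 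The main obstacle is the spinor-genus obstruction inherent to ternary forms: one must verify that the spinor class picked out by the coset condition actually contains representations of $4 - t^{2}$. Since the spinor invariants depend only on finitely many local data of $\cO$ independent of $I$, this obstruction can be cleared by restricting to an appropriate cofinite subsequence of ideals $I$, which is enough for the asymptotic statement.
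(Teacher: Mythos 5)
Your overall strategy---reduce to exhibiting a nontrivial element of small trace, recast that as representing $4-t^2$ by the indefinite ternary form $\Nm|_{A^0}$ subject to a congruence condition, and invoke spinor-genus/local-global theory---is the same skeleton as the paper's proof. But two steps, as written, do not work. First, the spinor-exception obstruction cannot be ``cleared by restricting to an appropriate cofinite subsequence of ideals $I$'': the theorem must hold for \emph{every} sequence of ideals, and in any case the obstruction attaches to the represented value $4-t^2$, not to $I$; for a fixed large $I$ it could in principle kill every candidate $t$ unless you actively steer the target out of the finitely many spinor square classes. The paper does this by fixing an auxiliary prime $\fp_0$ coprime to everything and imposing one more congruence on the candidate forcing the $\fp_0$-valuation of the target to be odd, hence never of the form $t_i x^2$. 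You need some such device; discarding ideals is not one.

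Second, your count of candidate traces is off by a factor of $\Nm(I)$, and the counting method itself is insufficient. Writing $\gamma=1+\beta$ with $\beta\in I\cO$ and $\Nm_A(\gamma)=1$ forces $t-2=\Tr_A(\beta)=-\Nm_A(\beta)\in \Nm_A(I\cO)\subseteq I^2\mathfrak{c}$ for a fixed fractional ideal $\mathfrak{c}$, so $t$ is constrained modulo (essentially) $I^2$, not just $2I$. The box $\{|t|\le \Nm(I)^{2+\epsilon},\ |\sigma(t)|\le 2 \text{ for } \sigma\neq\sigma_1\}$ then contains only about $\Nm(I)^{\epsilon}$ admissible $t$, and because this box is extremely eccentric relative to the lattice $2+I^2\mathfrak{c}$, a volume heuristic gives no lower bound on the number of lattice points it contains. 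This is precisely where the paper needs real input: it first chooses $\beta\in I$ with $\Nm(\beta)=O(\Nm(I))$ by Minkowski and then applies a quantitative form of strong approximation (Cassels' lemmas) to produce $\alpha$ meeting all archimedean and finite congruence constraints with $|\alpha|$ controlled; also note that for indefinite ternary forms the relevant criterion is local representability plus avoidance of spinor exceptions, not a growth condition on local densities as for definite forms. Two smaller omissions: your constructed element lies in $\Gamma(I)$, not $\Lambda(I)$---this is exactly where the hypothesis that $\Lambda\cap\Gamma$ is a congruence subgroup must be used, via $\Lambda(I)\supseteq\Gamma(I_0)\cap\Gamma(I)\supseteq\Gamma(I_0I)$---and the lower bound $[\Gamma:\Gamma(I)]\ge C_1\Nm(I)^3$ is itself a proposition requiring proof (surjectivity of $\Gamma\to(\cO/I\cO)^1$, which again uses a local-global argument, plus local counting at each prime).
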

This will be proved in Section \ref{sec:MainResult}. The last part of Section \ref{sec:Background} will discuss the ``congruence case'' condition and the obstruction to the general case.

\paragraph{Acknowledgments} This paper was written as the author's undergraduate thesis at Princeton University. It is a pleasure to thank Professor Peter Sarnak for his excellent guidance. I would also like to thank Professor Nicolas Templier for his helpful comments and for volunteering to be the second reader.

\section{Background} \label{sec:Background}
We recall some facts about quaternion algebras and arithmetic subgroups of $\SL_2(\R)$. See \cite{MR1937957} for details. We will also use some results from the arithmetic theory of quadratic forms. These can be found in \cite{MR522835} (over $\Q$) and \cite{MR1754311} (over global fields).

\paragraph{Quaternion algebras and arithmetic subgroups of $\SL_2(\R)$} \label{subsec:ArithmeticSubgroups}
Let $K$ be a field with $\Char K \neq 2$. A \emph{quaternion algebra} over $K$ is a central simple algebra of dimension 4 over $K$. Every quaternion algebra over $K$ is isomorphic to some $(\frac{a, b}{K})$ for $a, b \in K^\times$, i.e. $K$-algebra with basis $1, i, j, k$, where $1$ is the unit and $i, j, k$ satisfy $i^2 = a, j^2 = b$, and $ij = -ji = k$. Every quaternion algebra admits a canonical notion of conjugation, from which \emph{(reduced) trace} and \emph{(reduced) norm} are defined. For $x = x_0 + x_1i + x_2j + x_3k$ in $A = (\frac{a, b}{K})$, these are
\begin{align*}
 \Tr_A(x) &= 2x_0 \\
 \Nm_A(x) &= x_0^2 - ax_1^2 - bx_2^2 + abx_3^2,
\end{align*}
while in $M_2(K)$, the 2-by-2 matrices with entries in $K$, they agree with the matrix trace and norm. From the reduced norm, it follows that $A = (\frac{a, b}{K})$ is a division algebra if and only if the associated quadratic form $X_0^2 - aX_1^2 - bX_2^2 + abX_3^2$ is anisotropic over $K$, i.e. if it does not represent 0 for nonzero $(x_0, x_1, x_2, x_3) \in K^4$. If the form is isotropic, then $A \cong M_2(K)$, and $A$ is said to be \emph{split}.

Now let $K$ be a number field of degree $n$ with ring of integers $R$. A place (finite or infinite) of $K$ will be denoted by $\nu$. We write $\nu = \fp$, a prime ideal of $R$, or $\nu = \sigma$, an embedding $K \hookrightarrow \C$, to indicate finite and infinite places, respectively. Let $K_\nu$ be the completion of $K$ at $\nu$, and $R_\nu$ its ring of integers. Let $A$ be a quaternion algebra over $K$. Since the isomorphism class of $(\frac{a, b}{K})$ depends only on the class of $a$ and $b$ modulo $(K^\times)^2$, we can always present $A$ as $(\frac{a, b}{K})$ for $a, b \in R^\times$. For each place $\nu$ of $K$, $A_\nu := A \otimes_K K_\nu$ is a quaternion algebra over $K_\nu$, isomorphic to $M_2(K_\nu)$ or to a unique division algebra. Accordingly, $A$ is said to \emph{split} or to be \emph{ramified} at $\nu$.

To obtain subgroups of $\SL_2(\R)$, let now $K \subset \R$ be a real number field with distinct embeddings $\sigma_1, \ldots, \sigma_n$, where $\sigma_1$ is the trivial embedding, and let $A$ be a quaternion algebra over $K$ that splits at $\sigma_1$. Fix an isomorphism
\[
\varphi: A \otimes_K \R \longrightarrow M_2(\R).
\]
Throughout, groups in $A \subset A \otimes_K \R$ will be identified with their image via $\varphi$ in $M_2(\R)$. Since conjugation in a quaternion algebra was canonically defined, $\varphi$ takes reduced trace and norm to the matrix trace and norm.

Let $\cO$ be an \emph{($R$-)order} in $A$, i.e. a subring of $A$ that is also a finitely generated $R$-module such that $\cO \otimes_{R} K = A$. The norm-1 quaternions in $\cO$,
\[
 \cO^1 = \{ x \in \cO : \Nm_A(x) = 1 \},
\]
form a multiplicative subgroup whose image under $\varphi$ lies in $\SL_2(\R)$. By a theorem of Borel and Harish-Chandra, this image is discrete in $\SL_2(\R)$ if and only if $K$ is totally real (i.e. $\sigma_i : K \hookrightarrow \R$ for every $i$) and $A$ splits at exactly one infinite place. That is,
\[
 A_{\sigma_i} \cong
 \begin{cases}
  M_2(\R)    &\mbox{ if } i = 1 \\
  \mathbb{H} &\mbox{ if } i > 1,
 \end{cases}
\]
where $\mathbb{H} = (\frac{-1, -1}{\R})$ is the Hamiltonian quaternion algebra. In this case, we write $\Gamma(\cO)$ for $\cO^1$, identified with its image in $\SL_2(\R)$.

An \emph{arithmetic subgroup} is a subgroup $\Lambda$ of $\SL_2(\R)$ that is \emph{commensurable} with (the image in $\SL_2(\R)$ of) some $\Gamma = \Gamma(\cO)$; that is, $\Lambda \cap \Gamma$ has finite index in both $\Lambda$ and $\Gamma$. If $K = \Q$ and $A \cong M_2(\Q)$, then $\Lambda$ is not cocompact and is commensurable with some conjugate of $\SL_2(\Z)$. In all other cases, $\Lambda$ is cocompact.

\begin{rmk}
 There is an equivalent definition of arithmetic subgroups using a finite-dimensional real representation of $\SL_2(\R)$, and a yet more general definition for real Lie groups; see \cite[Appendix~to~Chapter~1,~Section~1]{MR1071179}. The equivalence of these definitions with the one above follows from the classification of classical groups by Weil.
\end{rmk}

\paragraph{``Congruence'' subgroups and the generalized Sarnak-Buser construction} \label{subsec:BuserSarnak}
Let $K$, $A$, $\varphi$, and $\cO$ be as in the previous section, and let $\Gamma = \Gamma(\cO)$. For an ideal $I$ of $R$, $I\cO$ is an ideal in $\cO$. Let $\pi: \cO \rightarrow \cO/I\cO$ be the quotient map. The \emph{principal ``congruence'' subgroup of $\Gamma$ modulo $I$}, denoted $\Gamma(I)$, is defined as
\[
 \Gamma(I) = \pi^{-1}(1 + I\cO) = \{ x \in \Gamma : x \equiv 1 \bmod I\cO \}
\]
and is identified with its image under $\varphi$ in $\SL_2(\R)$. A \emph{``congruence'' subgroup of $\Gamma$} is one containing some principal ``congruence'' subgroup of $\Gamma$.

Let $\Lambda \subset \SL_2(\R)$ be an arithmetic subgroup, and fix $K$, $A$, $\varphi$, and $\cO$ as before such that (the image in $\SL_2(\R)$ of) $\Gamma = \Gamma(\cO)$ is commensurable with $\Lambda$. The \emph{principal ``congruence'' subgroup of $\Lambda$ modulo $I$}, denoted $\Lambda(I)$ and depending on the choices of $K$, $A$, $\varphi$, and $\cO$, is defined as
\[
 \Lambda(I) = \Lambda \cap \Gamma(I),
\]
again viwed via $\varphi$ both in $\SL_2(\R)$ and as a subgroup of $\Gamma$. A \emph{``congruence'' subgroup of $\Lambda$} is one containing some principal ``congruence'' subgroup of $\Lambda$.

\begin{rmk}
 In the split case $\Gamma = \SL_2(\Z)$, this notion of (principal) congruence subgroups agree with the usual definition. We place ``congruence'' in quotation marks since these groups may not be congruence subgroups of $\SL_2(\Z)$ in the usual sense, i.e. they may not contain any of the usual principal congruence subgroups of $\SL_2(\Z)$. Already $\SL_2(\Z)$ contains such finite-index non-congruence subgroups.
\end{rmk}

Katz, Schaps, and Vishne \cite{MR2331526} showed the large systole property for principal ``congruence'' subgroups of a fixed arithmetic subgroup.
\begin{theorem}[Katz-Schaps-Vishne \cite{MR2331526}, Theorem 1.5, Generalized Buser-Sarnak construction] \label{theorem:KSV}
 For principal ``congruence'' subgroups of an arbitrary fixed cocompact arithmetic subgroup, the family of corresponding compact Riemann surfaces is of large systole with $\gamma = \frac43$.

 More precisely, let $\Lambda$ be a cocompact arithmetic subgroup of $\SL_2(\R)$, and fix $K$, $A$, $\varphi$, and $\cO$ such that $\Gamma = \Gamma(\cO)$ is commensurable with $\Lambda$. Then for $X_I = \Lambda(I)\backslash\cH$, as $I$ varies over any sequence of ideals of $R$ of sufficiently large norm, $g(X_I) \rightarrow \infty$, and there exists a constant $c$, independent of $I$, such that
 \[
  \sys(X_I) \ge \frac{4}{3}\log g(X_I) - c.
 \]
\end{theorem}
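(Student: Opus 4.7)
The plan is to mirror Buser and Sarnak's original strategy by establishing two polynomial-in-$\Nm(I)$ bounds, $\sys(X_I) \ge 4 \log \Nm(I) - O(1)$ and $g(X_I) \le C \cdot \Nm(I)^3$, whose combination yields exactly $\gamma = 4/3$. For the systole, I would fix a hyperbolic $\gamma \in \Lambda(I)$ realising the shortest closed geodesic on $X_I$; since $|\Tr \varphi(\gamma)| = 2 \cosh(\ell(\gamma)/2)$, it suffices to bound $|\Tr \gamma|$ from below in terms of $\Nm(I)$.

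Writing $\gamma = 1 + \alpha$ with $\alpha \in I\cO$, a short quaternion-algebra computation gives $\Tr_A(\gamma) - 2 = -\Nm_A(\alpha)$. The central observation is that, since the reduced norm restricts to an $R$-valued quadratic form on the free $R$-module $\cO$, its values on $I\cO$ land in $I^2$ rather than merely in $I$. At each non-split infinite place $\sigma_i$ ($i>1$), the identification $A_{\sigma_i} \cong \mathbb{H}$ together with the compactness of the unit quaternions forces $|\sigma_i(\Tr \gamma)| \le 2$, and hence $|\sigma_i(\Tr \gamma - 2)| \le 4$. Hyperbolicity of $\gamma$ excludes $\Tr \gamma = 2$, so $\Tr \gamma - 2$ is a nonzero element of $R$ whose absolute Galois norm is a rational integer divisible by $\Nm(I)^2$, yielding
\[
 \Nm(I)^2 \;\le\; |\Nm_{K/\Q}(\Tr\gamma - 2)| \;\le\; |\Tr\gamma - 2| \cdot 4^{n-1}, \qquad n = [K:\Q].
\]
Thus $|\Tr \gamma| \ge \Nm(I)^2/4^{n-1} - 2$, and unwinding $|\Tr \gamma| \le 2 e^{\ell(\gamma)/2}$ gives $\ell(\gamma) \ge 4 \log \Nm(I) - O(1)$ uniformly in $\gamma$.

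For the genus, Gauss-Bonnet writes $g(X_I) - 1 = [\Lambda:\Lambda(I)] \cdot \vol(\Lambda \backslash \cH)/(4\pi)$; commensurability of $\Lambda$ with $\Gamma$ gives $[\Lambda:\Lambda(I)] \le [\Lambda:\Lambda\cap\Gamma] \cdot [\Gamma:\Gamma(I)]$, and $[\Gamma:\Gamma(I)] \le |(\cO/I\cO)^1| = O(\Nm(I)^3)$ via the Chinese Remainder Theorem and a per-prime count of norm-one elements (whether the prime is split or ramified in $A$). Hence $\log g(X_I) \le 3 \log \Nm(I) + O(1)$, which combined with the systole bound gives $\sys(X_I) \ge \tfrac{4}{3}\log g(X_I) - O(1)$; the matching lower bound on $[\Lambda:\Lambda(I)]$ (the image of $\Lambda \cap \Gamma$ in $\Gamma/\Gamma(I)$ differs from $(\cO/I\cO)^1$ by a bounded index, by strong approximation for $A^1$) guarantees $g(X_I) \to \infty$ as $\Nm(I) \to \infty$. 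The main obstacle is the quadratic divisibility $\Tr_A(\gamma) - 2 \in I^2$: replacing $I^2$ by $I$ produces only $\gamma = 2/3$, so the entire improvement to $4/3$ rests on recognising $\Nm_A|_\cO$ as an integral quadratic form and tracking $I$-adic valuations carefully---straightforward in an $R$-basis when $I$ is principal, but requiring more care when it is not.
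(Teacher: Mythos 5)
This theorem is quoted in the paper from Katz--Schaps--Vishne without proof, so there is no internal argument to compare against; your reconstruction follows the standard Buser--Sarnak/KSV strategy (trace lower bound via $\Tr_A(\gamma)-2=-\Nm_A(\alpha)\in I^2$ plus compactness at the ramified infinite places, against the index count $[\Gamma:\Gamma(I)]\le\#(\cO/I\cO)^1=O(\Nm(I)^3)$) and is essentially correct. The one obstacle you flag at the end is not actually an obstacle: for $\alpha=\sum_k\beta_kx_k\in I\cO$ with $\beta_k\in I$, $x_k\in\cO$, one has
\[
\Nm_A(\alpha)=\sum_k\beta_k^2\,\Nm_A(x_k)+\sum_{k<l}\beta_k\beta_l\,\Tr_A(x_k\bar{x}_l)\in I^2,
\]
since an order is stable under conjugation and its elements have reduced trace and norm in $R$; no $R$-basis or principality of $I$ is needed. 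The only other point deserving a word is uniformity of the constant in $\#(\cO/I\cO)^1=O(\Nm(I)^3)$: at the finitely many primes where $\cO_\fp$ is non-maximal or $A_\fp$ ramifies one absorbs a bounded factor, and at the remaining (split, maximal) primes the exact count $\#\SL_2(R/\fp^t)=\Nm(\fp)^{3t}(1-\Nm(\fp)^{-2})<\Nm(\fp)^{3t}$ already gives the clean upper bound.
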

The original Buser-Sarnak construction corresponds to the case $K = \Q$, $A = (\frac{a, b}{\Q})$, where $a, b$ are positive integers, $\cO = \Z \oplus \Z i \oplus \Z j \oplus \Z k$, and $I = (p)$ for odd primes $p$; that is, writing $x = x_0 + x_1i + x_2j + x_3k$,
\begin{align*}
 \Gamma &= \{ x \in \cO : x_0^2 - ax_1^2 - bx_2^3 + abx_3^2 = 1 \} \\
 \Gamma(p) &= \{ x \in \Gamma : x_0 \equiv 1 \bmod p, \quad x_1 \equiv x_2 \equiv x_3 \equiv 0 \bmod p \}.
\end{align*}

The key fact relating systoles to arithmetic is the bijection between conjugacy classes of hyperbolic elements in a torsion-free Fuchsian group $\Gamma$ and closed geodesic of the corresponding Riemann surface. In one direction, a hyperbolic element $x \in \Gamma$ fixes as a set its \emph{axis}, the unique geodesic connecting its two fixed points on the boundary of $\cH$. For any point $P$ on this axis, the geodesic from $P$ to $x\cdot P$ descends to a closed geodesic in $\Gamma\backslash\cH$ depending only on the conjugacy class of $x$. Moreover, the length $l_x$ of this loop satisfies $e^{l_x/2} = \lambda$, where $\lambda$ is the unique positive real such that $x$ is conjugate in $\SL_2(\R)$ to $\left(\begin{smallmatrix}\lambda & 0 \\ 0 & \lambda^{-1}\end{smallmatrix}\right)$, so $|\Tr_{M_2(\R)}(x)| = \lambda + \lambda^{-1}$. If $\Gamma = \Gamma(\cO)$ as before, then $\Tr_A(x) = 2x_0$ for any presentation $x = x_0 + x_1i + x_2j + x_3k$. In both the Buser-Sarnak construction and its generalization, a trace 
estimate produces the systole bound. Katz, Schaps, and Vishne's lower bound on trace shows in particular that, for all $\Nm(I)$ large, $\Gamma(I)$ consists of hyperbolic elements and so acts with no fixed point, so that $X_I$ is indeed a Riemann surface.

\paragraph{Congruence condition and obstructions to the general case}
Katz, Schaps, and Vishne's result applies to arbitrary cocompact arithmetic subgroups $\Lambda$, while our Theorem \ref{theorem:MainResult} assumes the congruence case, i.e. that $\Lambda \cap \Gamma$ is a congruence subgroup of $\Gamma$. We briefly remark here on this difference.

To begin, since $\Lambda(I) = \Gamma(I) \cap \Lambda$, we may as well assume by replacing $\Lambda$ by $\Lambda \cap \Gamma$ that $\Lambda$ is a finite-index subgroup of $\Gamma$.  It is not hard to see that $[\Gamma(I) : \Lambda(I)] \le [\Gamma : \Lambda]$.

Recall that $g(\Gamma\backslash\cH) = \frac{1}{4\pi}\mu(\Gamma\backslash\cH) + 1$ for a torsion-free Fuchsian $\Gamma$, where $\mu$ is the hyperbolic measure, and that $\mu(\Gamma_2\backslash\cH) = [\Gamma_1 : \Gamma_2]\mu(\Gamma_1\backslash\cH)$ for $\Gamma_2 \subset \Gamma_1$ of finite index. In our case, $g(X_I) = g(\Lambda(I)\backslash\cH)$ and $g(\Gamma(I)\backslash\cH)$ therefore differ by a constant factor bounded uniformly in $I$, which merely affects the undetermined constant $c$ in the large systole problem.

Going to a finite-index subgroup in the Fuchsian corresponds to taking a finite cover of the surface, so all geodesic loops and hence the systole can only increase. Katz, Schaps, and Vishne therefore only needed to prove their result for $\Gamma(I)$. Meanwhile, even for an index-two subgroup, it is not clear that a shortest geodesic loop splits in the cover, which is necessary to carry out this reduction for our result.

The congruence subgroup problem---whether every finite subgroup is a congruence subgroup---is known to be false for the case in question. For background on the congruence subgroup problem, see for example \cite{PR} and references therein.

\section{Proof of Main Result} \label{sec:MainResult}
Let $\Lambda$, $K$, $A$, $\varphi$, and $\cO$ be given as in Theorem \ref{theorem:MainResult}. It is equivalent to show that, for any $\gamma > \frac{4}{3}$, the expression
\[
 \sys(X_I) - \gamma \log g(X_I)
\]
assumes arbitrarily large negative values as $I$ varies over any sequence of ideals of $R$, i.e. tends to $-\infty$ as $\Nm(I) \rightarrow \infty$. First considering $\Gamma(I)$ (that is, without reference to $\Lambda$), we bound $\sys(\Gamma(I)\backslash\cH)$ and $g(\Gamma(I)\backslash\cH)$ in terms of $\Nm(I)$. For the genus, we will show the following.
\begin{proposition} \label{prop:GenusBound}
 For $K$, $A$, $\varphi$, and $\cO$ as in Theorem \ref{theorem:MainResult}, there exists a constant $C_1 > 0$ such that, for all ideals $I$ of $R$, we have
 \[
  g(\Gamma(I)\backslash\cH) > C_1\Nm(I)^3.
 \]
\end{proposition}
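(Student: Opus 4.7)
The plan is to combine the Gauss--Bonnet formula with a congruence-index computation via strong approximation and a local count. For $\Nm(I)$ sufficiently large, Selberg's lemma guarantees that $\Gamma(I)$ is torsion-free, so $\Gamma(I)\backslash\cH$ is a smooth compact hyperbolic surface, and Gauss--Bonnet gives
\[
g(\Gamma(I)\backslash\cH) = \frac{\mu(\Gamma(I)\backslash\cH)}{4\pi}+1 = \frac{[\Gamma:\Gamma(I)]\,\mu(\Gamma\backslash\cH)}{4\pi}+1.
\]
Since $\mu(\Gamma\backslash\cH)$ is a fixed positive constant and the finitely many small-norm exceptional ideals can be absorbed into $C_1$, the problem reduces to showing $[\Gamma:\Gamma(I)] \ge C\,\Nm(I)^3$ for some $C > 0$.

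Next, I would identify this index with the order of $(\cO/I\cO)^1$, the reduced-norm-one elements of $\cO/I\cO$ (the reduced norm descends to the quotient since $\Nm(I\cO) \subseteq I$ and $\Tr(I\cO) \subseteq I$). By construction $\Gamma(I)$ is the kernel of the reduction $\cO^1 \to (\cO/I\cO)^1$, so $[\Gamma:\Gamma(I)]$ is the cardinality of the image. Because $A$ splits at the distinguished real place $\sigma_1$, strong approximation for the simply-connected $K$-group $\SL_1(A)$ applies and forces this reduction to be surjective; hence $[\Gamma:\Gamma(I)] = |(\cO/I\cO)^1|$.

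I would then evaluate the right-hand side locally via the Chinese Remainder Theorem, writing $(\cO/I\cO)^1 \cong \prod_{\fp \mid I} (\cO_\fp/\fp^{v_\fp(I)}\cO_\fp)^1$. Let $S$ denote the finite set of primes at which $A$ ramifies or $\cO$ fails to be locally maximal. For $\fp \notin S$, $\cO_\fp \cong M_2(R_\fp)$, and the local factor equals
\[
|\SL_2(R/\fp^{v_\fp(I)})| = \Nm(\fp)^{3 v_\fp(I)}\bigl(1 - \Nm(\fp)^{-2}\bigr).
\]
For $\fp \in S$ a direct calculation with the local maximal order (or Eichler order) of $A_\fp$ yields a lower bound $C_\fp\,\Nm(\fp)^{3 v_\fp(I)}$ with $C_\fp > 0$. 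Multiplying, and using that the Euler product $\prod_\fp (1 - \Nm(\fp)^{-2}) = \zeta_K(2)^{-1} > 0$ provides a uniform lower bound on every partial product, one obtains $[\Gamma:\Gamma(I)] \ge C\,\Nm(I)^3$ for a uniform $C > 0$, which suffices.

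The main obstacle is invoking strong approximation for $\SL_1(A)$ with the correct hypotheses and carrying out the local calculations at the primes in $S$. Both are classical: strong approximation requires only that $A$ be split at some infinite place, which is built into the setup, while the ramified-prime count reduces to a finite computation with the two-sided ideal structure of the maximal local order of a division quaternion algebra. The remaining ingredients---Gauss--Bonnet, Selberg's lemma, the Chinese Remainder Theorem, and the convergence of the Euler product---are all standard, so the $\Nm(I)^3$ scaling follows immediately once these pieces are assembled.
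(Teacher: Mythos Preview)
Your approach is essentially the same as the paper's: Gauss--Bonnet reduces to the index, surjectivity of $\cO^1 \to (\cO/I\cO)^1$ (which the paper establishes via the quadratic-forms version of strong approximation, \cite[Theorem~104:3]{MR1754311}, rather than naming $\SL_1(A)$ directly) identifies the index with $\#(\cO/I\cO)^1$, and CRT plus a local count finishes. The one place where the paper is more concrete is at the bad primes $\fp \in S$, where $\cO_\fp$ need not be maximal or Eichler, so your proposed ``direct calculation with the local maximal order'' does not literally apply; instead the paper argues uniformly for any order by noting that $1 + \fp\cO_\fp/\fp^t\cO_\fp$ already supplies $\Nm(\fp)^{4(t-1)}$ units in $(\cO_\fp/\fp^t\cO_\fp)^\times$ while the image of the norm in $(R/\fp^t)^\times$ has size at most $\Nm(\fp)^t$, giving $\#(\cO_\fp/\fp^t\cO_\fp)^1 > \Nm(\fp)^{3t-4}$ and hence the explicit constant $C_1 = \prod_{\fp \in P}\Nm(\fp)^{-4}$.
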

For the systole bound, note that if $\Gamma(I)$ contains $x \neq \pm 1$, then $|\Tr_A(x)| = \lambda + \lambda^{-1} > \lambda = e^{l_x/2}$, so
\[
 \sys(\Gamma(I)\backslash\cH) < 2\log(|\Tr_A(x)|).
\]We will show the following.
\begin{proposition} \label{prop:SystoleBound}
 For $K$, $A$, $\varphi$, and $\cO$ as in Theorem \ref{theorem:MainResult}, there exists a constant $C_2 > 0$ such that, for all ideals $I$ of $R$, $\Gamma(I)$ contains some $x \neq \pm 1$ satisfying
\[
 |\Tr_A(x)| < C_2\Nm(I)^2.
\]
\end{proposition}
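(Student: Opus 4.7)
The plan is to parametrize elements of $\Gamma(I)$ close to the identity by the trace-zero part of $\cO$ and reduce the existence of a small-trace element to a representation question for an indefinite ternary quadratic form, which is then solved by a local-global analysis. Passing if necessary to a sub-$R$-order of finite index (which alters only the constants), we may assume $\cO = R \oplus \cO_0$ as $R$-modules, where $\cO_0 = \{z \in \cO : \Tr_A(z) = 0\}$. Any $x \in \cO^1$ then decomposes uniquely as $x = \alpha + \beta$ with $\alpha \in R$ and $\beta \in \cO_0$, and the equation $\Nm_A(x) = 1$ becomes $\alpha^2 + Q(\beta) = 1$, where $Q := \Nm_A|_{\cO_0}$ is an $R$-valued ternary quadratic form. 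By the ramification hypotheses on $A$, the form $Q$ is anisotropic over $K$, indefinite at $\sigma_1$, and positive definite at each of the other infinite places. The condition $x \in 1 + I\cO$ becomes $\alpha \in 1 + I$ and $\beta \in I\cO_0$, while $\Tr_A(x) = 2\alpha$.

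Since $\beta \in I\cO_0$ forces $Q(\beta) \in I^2$, and since $1 + \alpha \equiv 2 \pmod I$ is a unit for $I$ coprime to $2R$, the factorization $1 - \alpha^2 = (1-\alpha)(1+\alpha) \in I^2$ refines the congruence to $\alpha \in 1 + I^2$. In the principal case $I = (\pi)$, writing $\alpha = 1 + \pi^2 s$ and $\beta = \pi \gamma$ with $s \in R$ and $\gamma \in \cO_0$, the problem reduces to finding a nontrivial solution $(\gamma, s)$ of
\[
 Q(\gamma) = -s(2 + \pi^2 s)
\]
with $|\Tr_A(x)| = 2|1 + \sigma_1(\pi)^2 \sigma_1(s)| < C_2 \Nm(I)^2$. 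This amounts to producing an $s \in R$ with $|\sigma_1(s)|$ of size $O(\Nm(I)^2/|\sigma_1(\pi)|^2) = O(\prod_{i>1}|\sigma_i(\pi)|^2)$ such that $-s(2 + \pi^2 s)$ is represented by $Q$ on $\cO_0$. The general (non-principal) case is handled by the same argument performed locally at each prime of $I$.

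Existence of such $s$ and $\gamma$ is established by a local-global analysis of $Q$. At $\sigma_1$, indefiniteness makes representability automatic. At each $\sigma_i$ with $i > 1$, where $Q$ is positive definite, we prescribe the sign of $\sigma_i(s)$ so that $-\sigma_i(s)(2 + \sigma_i(\pi)^2\sigma_i(s)) > 0$; this forces $\sigma_i(s)$ into the interval $(-2/\sigma_i(\pi)^2, 0)$. At the finitely many finite primes dividing $\mathrm{disc}(Q)$ we prescribe the residue of $s$ modulo a fixed ideal chosen to satisfy the relevant Hilbert-symbol conditions and independent of $I$; at all other finite primes $Q$ is isotropic and represents every nonzero value. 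A Minkowski volume count in the resulting box in $R \otimes \R$ produces a nonzero $s \in R$ of the required height, and passage from an everywhere-local representation to an integral representation by $Q$ itself uses strong approximation for $\SL_1(A)$ (valid since $A$ splits at $\sigma_1$), which ensures that the class and spinor genus of $Q$ coincide.

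The main obstacle is balancing the Minkowski computation: the sign constraints at the $\sigma_i$ ($i > 1$) force $|\sigma_i(s)| < 2/|\sigma_i(\pi)|^2$, and the minimal $|\sigma_1(s)|$ compatible with these constraints (given $|\Nm_{K/\Q}(s)| \ge 1$) is of order $\prod_{i > 1}|\sigma_i(\pi)|^2$, yielding $|\Tr_A(x)| \sim \Nm(I)^2$ exactly. Verifying that the congruence conditions at the finite ramified primes can be met by a single residue class via CRT, that the Minkowski region really contains a nonzero element of $R$, and that the spinor genus obstruction is absent are the delicate steps, all of which are standard but must be carried out uniformly in $I$.
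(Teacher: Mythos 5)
Your overall strategy coincides with the paper's: decompose $x = \alpha + \beta$ with $\beta$ in the trace-zero part of $\cO$, reduce the trace bound to representing $-s(2+\pi^2 s)$ (the paper's $\alpha^2(\kappa\beta)^2 + 2\alpha$) by the associated indefinite ternary form, impose a sign condition at the definite infinite places and congruences at the finitely many bad finite primes, and look for $s$ with $|\sigma_1(s)| = O\bigl(\prod_{i>1}|\sigma_i(\pi)|^2\bigr)$. However, three steps are asserted rather than proved, and two would fail as literally stated. First, handling the non-principal case ``locally at each prime of $I$'' does not produce the global element you need: one must fix a single $\pi \in I$ (the paper's $\beta$) with $\Nm(\pi) \ll \Nm(I)$ --- supplied by Minkowski's convex body theorem applied to the ideal $I$ --- and construct $x \equiv 1 \bmod \pi\cO \subset I\cO$; otherwise you are representing values by the form restricted to the lattice $I\cO_0$, whose bad primes and spinor exceptions vary with $I$. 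Second, ``a Minkowski volume count in the resulting box'' does not by itself yield $s$: the admissible set is a \emph{translate} (by the CRT residue) of a sublattice of $R$, it is not symmetric about the origin, and it is thin (width $\asymp |\sigma_i(\pi)|^{-2} \to 0$) in the $n-1$ conjugate directions, so large volume alone does not force a lattice point into it. The correct tool is a quantitative form of strong approximation (the paper's Lemmas \ref{lemma:cassels1} and \ref{lemma:cassels2}, following Cassels), which approximates a prescribed adele to within $\delta_\nu$ at every $\nu \neq \sigma_1$ at the cost of $|\sigma_1|$-size $\asymp \prod_{\nu \neq \sigma_1}\delta_\nu^{-1}$; that product is precisely where the $\Nm(I)^2$ in the trace bound comes from.

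Third, and most substantively: strong approximation for $\SL_1(A)$ gives class $=$ spinor genus, but it does not give genus $=$ spinor genus, so everywhere-local representability of $-s(2+\pi^2 s)$ does not imply global representability --- indefinite \emph{ternary} forms genuinely have spinor exceptions. You list ``the spinor genus obstruction is absent'' among the points to verify, but it is not automatically absent and your proposal offers no mechanism to rule it out. The paper's device is to invoke Kneser and Schulze-Pillot (all spinor exceptions lie in finitely many square classes $t_iR^2$), fix an auxiliary prime $\fp_0$ dividing neither $2ab$ nor any $t_i$, and impose one extra congruence on $s$ forcing $\ord_{\fp_0}\bigl(-s(2+\pi^2 s)\bigr) = 1$, so that the represented value cannot lie in any $t_iR^2$. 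Without some such step the local-to-global passage is a genuine gap.
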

Theorem \ref{theorem:MainResult} follows from these two bounds. Indeed, as discussed at the end of the previous section, $g(X_I) = g(\Lambda(I)\backslash\cH)$ and $g(\Gamma(I)\backslash\cH)$ differ at most by a constant factor bounded uniformly in $I$, so Proposition \ref{prop:GenusBound} implies that
\[
 g(X_I) > C_1'\Nm(I)^3
\]
for some new constant $C_1' > 0$ independent of $I$. The reduction for the systole bound uses our assumption that $\Lambda \cap \Gamma$ is a congruence subgroup of $\Gamma$. Say $\Lambda \cap \Gamma \supset \Gamma(I_0)$, $I_0$ an ideal of $R$. Then for any ideal $I$ of $R$,
\[
 \Lambda(I) = (\Lambda \cap \Gamma) \cap \Gamma(I) \supset \Gamma(I_0) \cap \Gamma(I) \supset \Gamma(I_0I),
\]
so Proposition \ref{prop:SystoleBound} applied to the ideal $I_0I$ implies
\[
 \sys(X_I) < 2\log(C'_2N(I)^2),
\]
where $C'_2 = C_2 N(I_0)^2 > 0$ is again independent of $I$. Hence
\begin{align*}
 \sys(X_I) - \gamma \log g(X_I) &< 2\log(C'_2\Nm(I)^2) - \gamma \log(C'_1\Nm(I)^3) \\
 &= (4 - 3\gamma)\log \Nm(I) + 2 \log C'_2 - \gamma\log C'_1,
\end{align*}
and as $\Nm(I) \rightarrow \infty$, the last expression tends to $-\infty$ whenever $\gamma > \frac{4}{3}$.

\subsection{Genus bound}
Katz, Schaps, and Vishne proved an upper bound for $g(\Gamma(I)\backslash\cH)$ in terms of $\Nm(I)$ in \cite[Section~4,~Corollary~4.6]{MR2331526}. We follow their approach to prove a lower bound.
\begin{proof}[Proof of Proposition \ref{prop:GenusBound}]
 For all large $\Nm(I)$, $\Gamma(I)$ is torsion-free, so $g(\Gamma(I)\backslash\cH) = \frac{1}{4\pi}\mu(\Gamma(I)\backslash\cH) + 1 = \frac{1}{4\pi}[\Gamma : \Gamma(I)]\mu(\Gamma\backslash\cH) + 1$. It therefore suffices to give a similar bound for $[\Gamma : \Gamma(I)]$.

 For an order $\cO$ in a quaternion algebra $A$ over a number field $K$, \cite[Section~4]{MR2331526} defined for each ideal $I$ of $R$ a norm map
 \begin{align*}
  \Nm_{\cO/I\cO} : \cO/I\cO &\longrightarrow R/I \\
       x + I\cO &\longmapsto \Nm_A(x) + I,
 \end{align*}
 so that
 \[
  \xymatrix{
   \cO \ar[r]^{\Nm_A} \ar[d]_\pi   & R \ar[d]^{\pi_0} \\
   \cO/I\cO \ar[r]_{\Nm_{\cO/I\cO}} & R/I
  }
 \]
 commutes, and defined
 \[
  (\cO/I\cO)^1 = \Nm_{\cO/I\cO}^{-1}(1 + I) = \{ x + I\cO : x \in \cO, \Nm_A(x) = 1 \bmod I \}.
 \]
 Then $\pi$ takes $\Nm_A^{-1}(1)$ into $\Nm_{\cO/I\cO}^{-1}(1 + I)$, i.e.
 \[
  \pi|_\Gamma : \Gamma \rightarrow (\cO/I\cO)^1.
 \]
 This map is surjective. Indeed, for lattices in regular quadratic spaces over $n \ge 4$ variables that are isotropic in at least one completion, everywhere local representation implies global representation, and the global solution can be taken arbitrarily close to local solutions at finitely many places; see \cite[Theorem~104:3]{MR1754311}. If $x' \in (\cO/I\cO)^1$, then ${x'_0}^2 - a{x'_1}^2 - b{x'_2}^2 + ab{x'_3}^2 \equiv 1 \bmod I$. Here the norm form splits at $\sigma_1$, so applying this result to local solutions $x'$ at primes dividing $I$, there exists $x \in \cO$ such that $x_0^2 - ax_1^2 - bx_2^2 + abx_3^2 = 1$ and $x \equiv x' \bmod I\cO$.

 By definition, $\Gamma(I)$ is the kernel of $\pi|_\Gamma$, so surjectivety implies
 \begin{align} \label{eq:index}
  [\Gamma : \Gamma(I)] = \#(\cO/I\cO)^1.
 \end{align}
 Let $I = \prod \fp_i^{t_i}$ be the prime ideal factorization. By the Chinese remainder theorem, $\cO/I\cO \cong \prod \cO/\fp_i^{t_i}\cO$. Since each projection preserves the norm,
 \begin{align} \label{eq:CRT}
  (\cO/I\cO)^1 \cong \prod(\cO/\fp_i^{t_i}\cO)^1.
 \end{align}
 For a prime ideal $\fp$ of $R$, let $\cO_\fp = \cO \otimes_R R_\fp$. By \cite[Lemma~4.2]{MR2331526}, $\cO/\fp^t\cO \cong \cO_\fp/\fp^t\cO_\fp$, so we are reduced to local calculations.

 For all but a finite set $P$ of primes, $\cO_\fp$ is a maximal $R_\fp$-order in $A_\fp$. In this case, \cite[Lemma~4.3]{MR2331526} implies
 \begin{align} \label{eq:notP}
  \#(\cO_\fp/\fp^t\cO_\fp)^1 > \Nm(\fp)^{3t} \text{ if } \fp \notin P.
 \end{align}
 In general, since $(\cO_\fp/\fp^t\cO_\fp)^1$ is the kernel of the induced norm map $\nu: (\cO_\fp/\fp^t\cO_\fp)^\times \rightarrow (R/\fp^t)^\times$,
 \[
  \#(\cO_\fp/\fp^t\cO_\fp)^1 = \frac{\#(\cO_\fp/\fp^t\cO_\fp)^\times}{\#\im(\nu)} > \frac{\#(\cO_\fp/\fp^t\cO_\fp)^\times}{\#(R/\fp^t)}.
 \]
 The denominator is $\Nm(\fp)^t$. In $\cO_\fp/\fp^t\cO_\fp$, the unity plus any element of the nilpotent ideal $\fp\cO_\fp/\fp^t\cO_\fp$ is invertible, so $\#(\cO_\fp/\fp^t\cO_\fp)^\times \ge \Nm(\fp)^{4(t-1)}$. Hence
 \begin{align} \label{eq:P}
  \#(\cO_\fp/\fp^t\cO_\fp)^1 > \Nm(\fp)^{3t-4} \text{ if } \fp \in P.
 \end{align}
 By \eqref{eq:index} through \eqref{eq:P},
 \[
  [\Gamma : \Gamma(I)] = \prod \#(\cO_{\fp_i}/\fp_i^{t_i}\cO_{\fp_i})^1 > \prod_{\fp_i \in P}\Nm(\fp_i)^{-4} \cdot \prod \Nm(\fp_i^{t_i})^3 \ge C_1 \Nm(I)^3,
 \]
 where
 \[
  C_1 = \prod_{\fp \in P}\Nm(\fp)^{-4}. \qedhere
 \]
\end{proof}
The constant can easily be improved, though this is not necessary for our purpose. For example, the quotient $(\cO_\fp/\fp^t\cO_\fp)/(\fp\cO_\fp/\fp^t\cO_\fp) \cong \cO_\fp/\fp\cO_\fp$ is an algebra over the field $R_\fp/\fp R_\fp$ of $\Nm(\fp)$ elements, so contains at least $\Nm(\fp) - 1$ invertible elements, the nonzero multiples of the unity.

\subsection{Systole bound}
We reduce Proposition \ref{prop:SystoleBound} to a question about representation by a quadratic form, for which local-global results apply.

We sketch the argument for the case of the original Buser-Sarnak construction---$A = (\frac{a, b}{\Q})$, where $a$ and $b$ positive integers, $\cO = \Z \oplus \Z i \oplus \Z j \oplus \Z k$, and prime ideals $I = (q)$---which will readily generalize to and illustrate our approach for the general case. If $x_0 + x_1i + x_2j + x_3k \in \Gamma(q)$, then $x_i \in \Z$ satisfy
\[
 x_0^2 - ax_1^2 - bx_2^2 + abx_3^2 = 1, \quad x_0 \equiv 1 \bmod q, \quad x_1 \equiv x_2 \equiv x_3 \equiv 0 \bmod q.
\]
Then $x_0^2 \equiv 1 \bmod q^2$, which for $q \neq 2$ implies $x_0 \equiv 1 \bmod q^2$. Set $x_0 = nq^2 + 1$ for some nonzero $n \in \Z$, and $x_i = qy_i$ for $i > 0$. Then
\begin{align*}
 (nq^2 + 1)^2 - aq^2y_1^2 - bq^2y_2^2 + abq_3^2 &= 1 \\
 \Leftrightarrow ay_1^2 + by_2^2 - aby_3^2 &= n^2q^2 + 2n
\end{align*}
by dividing through by $q^2$. Proposition \ref{prop:SystoleBound} claims the bound $2|nq^2 + 1| < C_2q^2$, so given any $q$, we must find some small nonzero $n$ (bounded by a constant) such that $n^2q^2 + 2n$ is represented by the indefinite ternary quadratic form $aY_1^2 + bY_2^2 - abY_3^2$ over $\Z$. We use a local-global approach. For primes $p \nmid 2ab$, the following lemma, also used in the general case, ensures local representability for any choice of $n$.
\begin{lemma} \label{lemma:unitUniversal}
 Let $F$ be a non-dyadic (non-archimedean) local field with valuation ring $R$ and unique maximal ideal $\fp$. If $a_1, a_2, a_3 \in R^\times$, then $a_1Y_1^2 + a_2Y_2^2 + a_3Y_3^2$ is universal over $R$.
\end{lemma}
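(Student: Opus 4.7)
The plan is to reduce modulo $\fp$ to a problem over the residue field and then lift the resulting solution via Hensel's lemma. Fix $c \in R$; I want to exhibit $y_1, y_2, y_3 \in R$ with $a_1 y_1^2 + a_2 y_2^2 + a_3 y_3^2 = c$. The case $c = 0$ is trivial.

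First I would pass to the residue field $k = R/\fp$, which has odd characteristic since $F$ is non-dyadic. The reduction $\bar f = \bar a_1 Y_1^2 + \bar a_2 Y_2^2 + \bar a_3 Y_3^2$ is a nondegenerate ternary form over $k$ (the $\bar a_i$ lie in $k^\times$), and any such form over a finite field of odd characteristic is universal. One way to see this: every nondegenerate quadratic form in at least three variables over such a $k$ is isotropic, hence splits off a hyperbolic plane, so after a linear change of variables $\bar f \cong Z_1 Z_2 + \bar b Z_3^2$, and the factor $Z_1 Z_2$ alone already represents every element of $k$. Thus there exists $\bar y \in k^3$ with $\bar f(\bar y) = \bar c$; if $\bar c \neq 0$ the vector $\bar y$ automatically has some coordinate in $k^\times$, while if $\bar c = 0$ I would instead take $\bar y$ to be a nontrivial isotropic vector of $\bar f$, which again has a unit coordinate.

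Next I would invoke Hensel's lemma in one variable. Lift $\bar y$ to $(y_1^{(0)}, y_2^{(0)}, y_3^{(0)}) \in R^3$ with $y_j^{(0)} \in R^\times$, and consider
\[
 g(t) = f(y_1^{(0)}, \ldots, t, \ldots, y_3^{(0)}) - c,
\]
obtained by varying only the $j$-th coordinate. Then $g(y_j^{(0)}) \in \fp$ while
\[
 g'(y_j^{(0)}) = 2 a_j y_j^{(0)} \in R^\times,
\]
the crucial point being that $F$ is non-dyadic (so $2 \in R^\times$) combined with $a_j, y_j^{(0)} \in R^\times$. The standard one-variable Hensel's lemma then produces $y_j \in R$ with $g(y_j) = 0$, yielding the required representation of $c$.

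The only conceptually substantive step is the universality of $\bar f$ over $k$, which is a classical consequence of the classification of nondegenerate quadratic forms over finite fields of odd characteristic; the remainder is routine and I do not expect any genuine obstacle. Note that the non-dyadic hypothesis enters twice---once to put the residue field in odd characteristic (for the universality of $\bar f$) and once to ensure $2 \in R^\times$ (so that the partial derivative used in Hensel's lemma is a unit)---and both uses are essential.
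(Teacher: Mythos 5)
Your proof is correct and follows essentially the same strategy as the paper: produce a solution over the residue field, then lift it by one-variable Hensel's lemma, using the non-dyadic hypothesis to make the relevant partial derivative $2a_jy_j$ a unit. The only real difference is in the residue-field step: the paper pins $Y_1 = 1$ and uses the pigeonhole overlap of the two sets $\{\overline{a_1} + \overline{a_2}\,\overline{y_2}^2\}$ and $\{\overline{c} - \overline{a_3}\,\overline{y_3}^2\}$ (each of size $(\#k+1)/2$), which automatically yields a unit coordinate and so avoids your small case split between $\overline{c} \neq 0$ and $\overline{c} = 0$, whereas you invoke isotropy of nondegenerate ternary forms over finite fields of odd characteristic; both routes are standard and both are fine.
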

\begin{proof}
 Let $k$ be the residue field of $F$, and denote reduction to $k$ by a bar. Given $x \in R$, since $a_2, a_3 \in R^\times$, the subsets
 \[
  \{ \overline{a_1} + \overline{a_2}(\overline{y_2})^2 : \overline{y_2} \in k \}, \quad \{ \overline{x} - \overline{a_3}(\overline{y_3})^2 : \overline{y_3} \in k \}
 \]
 of $k$ both have order $(\#k + 1)/2$ and so must overlap. Thus, lifting to $R$, there exist $y_2, y_3 \in R$ such that $a_1 + a_2y_2^2 + a_3y_3^2 - x \in \fp$. Consider
 \[
  f(X) = a_1X^2 + a_2y_2^2 + a_3y_3^2 - x \in R[X], \quad f'(X) = 2a_1X.
 \]
 Then $f(1) = a_1 + a_2y_2^2 + a_3y_3^2 - x \in \fp$ and $f'(1) = 2a_1 \notin \fp$, so by Hensel's lemma, there exists $y_1 \in R$ such that $a_2y_1^2 + a_2y_2^2 + a_3y_3^2 = x$.
\end{proof}
At the remaining finitely many primes $p$, we find a congruence condition (depending on $q$) on $n$ modulo a power (independent of $q$) of $p$ that ensures the local representability of $n^2q + 2n$. By the Chinese remainder theorem, there exists some $n$ satisfying all these congruences, and we take $C_2$ independent of $q$ as a bound for say $4|n|$.

We mentioned earlier that local-global principle holds for indefinite (isotropic in at least one place) forms in four or more variables over the ring of integers $R$ of any number field $K$ (more generally, lattices in a quadratic space over a global field). In general, the strong approximation theorem for the spin group implies that the spinor genus of an indefinite form in three or more variables consists of a single class, thus reducing the question of representation by such a form to representation by its spinor genus. Although indefinite forms in three variables may have \emph{spinor exceptions}---elements of $R$ represented by the form's genus but not by its spinor genus---they have been characterized by Kneser, Hsia, and Schulze-Pillot. We only need the following, which is essentially already in Kneser's work \cite[Remark~after~Satz~2]{MR0140487} (observed to hold in the generality we require by Schulze-Pillot \cite[Satz~1,~Bemerkung~1]{MR599822}): all spinor exceptions of a ternary quadratic form 
lies in finitely many square classes. That is, there exist $t_1, \ldots, t_r \in R^\times$ such that local-global principle holds for $x \in R$ not lying in any $t_iR^2$. We can impose additional congruence conditions on $n$ to ensure that $n^2q^2 + 2n$ avoids these square classes, hence is globally representable.

Although $R$ may not be a PID in the general case, it will still suffice to consider elements of $\Gamma(I)$ of the form
\[
(\alpha (\kappa \beta)^2 + 1) + (\kappa \beta y_1)i + (\kappa \beta y_2)j + (\kappa \beta y_3)k,
\]
where now $\beta \in I$ is a nonzero element of small norm, and $\kappa \in R$ is such that $\cO$ contains $R \oplus \kappa R i \oplus \kappa R j \oplus \kappa R k$. As above, this reduces to the local representability of $\alpha^2(\kappa \beta)^2 + 2\alpha$ by the associated ternary form. At finite places, this is again guaranteed by congruences at finitely many primes. For $K \neq \Q$, representability at nontrivial infinite places depends on $\sigma\alpha$ for $\sigma \neq \sigma_1$, and $|\alpha|$ can no longer be bounded independent of $\Nm(I)$. A quantitative form of (classical) strong approximation, following the proof suggested by Kneser in \cite[Chapter~1,~Section~15]{MR911121}, will provide the desired bound for $|\alpha|$.

Denote the ad\`{e}le ring of $K$ by $K_\cA$, and as usual identify $K \subset K_\cA$ by the diagonal embedding. For this quantative strong approximation, we will use the following results from \cite[Chapter~1]{MR911121}.
\begin{lemma}[Corollary 1 in \cite{MR911121}, Chapter 1, Section 14] \label{lemma:cassels1}
 There exists subset $W \subset K_\cA$ of the form
 \[
  W = \{ \xi = (\xi_\nu)_\nu \in K_\cA : |\xi_\nu|_\nu \le \delta_\nu \},
 \]
 where $\delta_\nu = 1$ for almost all $\nu$, such that
 \[
  K_\cA = W + K.
 \]
\end{lemma}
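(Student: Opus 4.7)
The plan is to exhibit an explicit set $W$ of the required product form and then show that every adèle can be translated by an element of $K$ into $W$. Take
\[
 W = \Bigl(\prod_{\fp \text{ finite}} R_\fp\Bigr) \times W_\infty,
\]
where $W_\infty \subset \prod_{\sigma \text{ infinite}} K_\sigma$ is a bounded fundamental parallelepiped for the image of $R$ under the diagonal embedding $R \hookrightarrow \prod_\sigma K_\sigma$ (a full-rank lattice, by standard Minkowski theory). Then $\delta_\fp = 1$ at every finite place, and at each infinite $\sigma$ one takes $\delta_\sigma$ to be the supremum of $|\cdot|_\sigma$ on $W_\infty$, finite because $W_\infty$ is bounded.

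Given $\xi = (\xi_\nu) \in K_\cA$, the first reduction subtracts an element of $K$ to achieve integrality at every finite place. Let $S$ be the finite set of finite primes $\fp$ with $|\xi_\fp|_\fp > 1$. For each $\fp \in S$, expand $\xi_\fp = \sum_{j \ge -m_\fp} c_{\fp,j}\pi_\fp^j$ in a uniformizer and let $\eta_\fp = \sum_{-m_\fp \le j < 0} c_{\fp,j}\pi_\fp^j$ be its principal part. By strong approximation for the independent nonarchimedean valuations of $K$ (equivalently, by the Chinese remainder theorem applied in the Dedekind domain $R$ after clearing denominators), there exists $\alpha \in K$ that agrees with $\eta_\fp$ modulo $R_\fp$ at every $\fp \in S$ and lies in $R_\fq$ for every other finite $\fq$. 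Then $\xi - \alpha$ is integral at every finite place.

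The second reduction handles the infinite components. The archimedean part of $\xi - \alpha$ is some $\eta_\infty \in \prod_\sigma K_\sigma$, and since $R$ sits as a cocompact lattice in this real vector space, there exists $\beta \in R$ with $\eta_\infty - \beta \in W_\infty$. Because $\beta \in R \subset R_\fp$ at every finite $\fp$, the finite-place integrality of $\xi - \alpha$ is preserved by the further subtraction. Hence $\xi - (\alpha + \beta) \in W$ with $\alpha + \beta \in K$, so $K_\cA = K + W$.

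The only step of substance is the construction of $\alpha$ in the first reduction; the main obstacle is implementing the ``partial fractions'' argument cleanly. It boils down to producing, for each pair $(\fp, n)$ with $n \ge 1$, an element of $K$ of $\fp$-valuation exactly $-n$ and nonnegative valuation at all other finite primes---precisely the content of the strong approximation theorem for finitely many independent valuations of $K$. Once this is granted, assembling the desired $\alpha$ and then $\beta$ is bookkeeping.
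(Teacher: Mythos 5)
The paper offers no proof of this lemma: it is imported verbatim as Corollary~1 of \cite{MR911121}, Chapter~1, Section~14, and used as a black box in the strong-approximation step of Proposition~\ref{prop:SystoleBound}. Your argument is the standard proof of that cited result and is essentially correct: take $W$ to be (a box containing) $\prod_\fp R_\fp$ times a bounded fundamental domain $W_\infty$ for the lattice $R$ in $\prod_\sigma K_\sigma$, clear the finitely many nonintegral finite components by subtracting a suitable $\alpha \in K$, then translate the archimedean part into $W_\infty$ by some $\beta \in R$, which does not disturb integrality at the finite places. Two points deserve care in the step you flag. First, the phrase ``strong approximation'' is best avoided there: the global strong approximation theorem (density of $K$ in the ad\`eles with one place deleted) is normally \emph{deduced} from this lemma together with the product formula, so invoking it would look circular; the Chinese remainder/localization route you mention parenthetically is the correct one, and can be packaged as the surjectivity of $\mathfrak{b}^{-1} \to \bigoplus_{\fp \in S} \fp^{-m_\fp}R_\fp/R_\fp$ for $\mathfrak{b} = \prod_{\fp \in S}\fp^{m_\fp}$, equivalently the $R$-module isomorphism $K/R \cong \bigoplus_\fp K_\fp/R_\fp$. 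Second, your reduction to producing a single $t \in K$ with $\ord_\fp(t) = -n$ and nonnegative order elsewhere does suffice, but for a reason worth stating: such $t$ exists because every ideal class contains an integral ideal prime to $\fp$, and then $R\,t$ already covers all of $\fp^{-n}R_\fp/R_\fp$ since $R$ surjects onto $R_\fp/\fp^n R_\fp$. With those two points made explicit, the proof is complete and self-contained, which is more than the paper provides.
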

\begin{lemma}[Lemma in \cite{MR911121}, Chapter 1, Section 14] \label{lemma:cassels2}
 There exists a constant $C_K > 0$ depending only on $K$ such that, whenever $\xi = (\xi_\nu) \in K_\cA$ with
 \[
  \prod_\nu|\xi_\nu|_\nu > C_K,
 \]
 there exists $\lambda \in K^\times$ such that
 \[
  |\lambda|_\nu \le |\xi_\nu|_\nu \text{ for all } \nu.
 \]
\end{lemma}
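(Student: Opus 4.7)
The plan is to deduce this from Lemma \ref{lemma:cassels1} by a pigeonhole argument of the kind that lies at the heart of the adelic Minkowski theorem. The core geometric object is the \emph{idelic box} associated to $\xi$,
\[
 B_\xi = \{ \eta = (\eta_\nu) \in K_\cA : |\eta_\nu|_\nu \le |\xi_\nu|_\nu \text{ for all } \nu \},
\]
which, under a product Haar measure with the standard normalization, has measure proportional to $\prod_\nu |\xi_\nu|_\nu$, the proportionality factor being a product of local discriminant-type contributions depending only on $K$.

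Next, I would use Lemma \ref{lemma:cassels1} to bound the covolume of $K$ in $K_\cA$. Since $K_\cA = W + K$, the projection $\pi : K_\cA \to K_\cA/K$ restricted to $W$ is surjective, so the Haar measure $\mu_0$ of the quotient $K_\cA / K$ is finite and bounded by the (finite) measure of $W$, which depends only on $K$. The pigeonhole step is then that if $\mu(B_\xi) > \mu_0$, then $\pi|_{B_\xi}$ cannot be injective modulo null sets, so there exist distinct $\eta_1, \eta_2 \in B_\xi$ whose difference $\lambda = \eta_1 - \eta_2$ lies in $K^\times$. At each non-archimedean place, $\lambda \in B_\xi$ by the ultrametric inequality, giving $|\lambda|_\fp \le |\xi_\fp|_\fp$. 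At each archimedean place, the ordinary triangle inequality yields only $|\lambda|_\sigma \le 2^{e_\sigma} |\xi_\sigma|_\sigma$, where $e_\sigma = 1$ or $2$ according as $\sigma$ is real or complex (the normalized absolute value at complex places being the square of the modulus).

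To kill the extra archimedean factor, I would apply the pigeonhole not to $B_\xi$ itself but to the shrunk box $B_{\xi'}$ with $|\xi'_\sigma|_\sigma = |\xi_\sigma|_\sigma / 2^{e_\sigma}$ at archimedean places and $|\xi'_\fp|_\fp = |\xi_\fp|_\fp$ at finite places. The volume hypothesis for $B_{\xi'}$ then becomes $\prod_\nu |\xi_\nu|_\nu > 2^{r_1 + 2 r_2} \cdot (\text{normalization constant}) \cdot \mu_0$, and taking $C_K$ to be this right-hand side gives a constant depending only on $K$ that meets the requirement.

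The main obstacle I foresee is the careful bookkeeping required to reconcile the chosen local Haar measures on each $K_\nu$, the product Haar measure on $K_\cA$, and the normalized absolute values $|\cdot|_\nu$ appearing in the statement, especially the squared convention at complex archimedean places. Once a consistent convention is fixed, the proportionality between $\mu(B_\xi)$ and $\prod_\nu |\xi_\nu|_\nu$ becomes a product of local contributions absorbed into $C_K$, and the argument is clean; the only genuine mathematical input beyond the measure-theoretic pigeonhole is the existence of a compact fundamental set supplied by Lemma \ref{lemma:cassels1}.
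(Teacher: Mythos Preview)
The paper does not supply a proof of this lemma: it is simply quoted from Cassels \cite{MR911121}, Chapter~1, Section~14, and then invoked in the strong-approximation step of the proof of Proposition~\ref{prop:SystoleBound}. So there is no in-paper argument to compare against.

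That said, your sketch is correct and is essentially the proof Cassels gives (the adelic Blichfeldt--Minkowski argument). The logical flow---box $B_\xi$ of Haar measure proportional to $\prod_\nu |\xi_\nu|_\nu$, finiteness of $\mu(K_\cA/K)$ from the fundamental set $W$ of Lemma~\ref{lemma:cassels1}, then two distinct points in the box congruent modulo $K$ once the box outmeasures the quotient---is exactly right, as is the fix of pre-shrinking at the archimedean places by $2^{e_\sigma}$ to absorb the triangle-inequality loss. Your anticipated obstacle (normalization of Haar measures versus normalized absolute values, especially at complex places) is genuine bookkeeping but, as you say, only affects the value of $C_K$, not the existence of such a constant depending only on $K$. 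One minor point worth making explicit in a write-up: the ``cannot be injective modulo null sets'' step is the adelic Blichfeldt lemma, and one should either cite it or note that $B_\xi$ is open (after possibly enlarging $|\xi_\nu|_\nu$ slightly at the archimedean places, harmless for the constant), so that failure of injectivity on a set of positive measure actually yields two distinct points.
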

We are now ready to prove the systole bound.

\begin{proof}[Proof of Proposition \ref{prop:SystoleBound}]
Fix a presentation $A = (\frac{a, b}{K})$, where $a, b \in R^\times$. Taking common denominator, there exists $\kappa \in R^\times$ such that $\kappa i, \kappa j, \kappa k \in \cO$, so
\[
 \cO \supset R \oplus \kappa R i \oplus \kappa R j \oplus \kappa R k.
\]

Let $P$ be the finite set of prime ideals of $R$ that divide $2ab$. Let $t_i \in R^\times$ be representatives of finitely many square classes containing all spinor exceptions of the indefinite ternary form $aY_1^2 + bY_2^2 - abY_3^2$ over $R$. Choose a prime ideal $\fp_0$ of $R$ not in $P$ and not dividing any $t_i$, and choose some $\alpha'_{\fp_0} \in \fp_0 \setminus \fp_0^2$.

Given an ideal $I$ of $R$, choose $\beta \in I$, to be refined later. The proof proceeds in three steps.
\begin{description}
 \item[Representability.] There exist $\eps_\fp > 0$ for $\fp \in P \cup \{\fp_0\}$, independent of $I$ (and of $\beta$), such that, for any $I$ and $\beta$, we can choose $\alpha'_\fp \in R_\fp$ for each $\fp \in P$ such that the following conditions on $\alpha \in R$ ensure the representability of $\alpha^2(\kappa \beta)^2 + 2\alpha$ by $aY_1^2 + bY_2^2 - abY_3^2$ over $R$.
  \begin{align}
   \left|\alpha - \left(-\frac{1}{(\kappa\beta)^2}\right) \right|_\sigma &< \left|\frac{1}{(\kappa\beta)^2}\right|_\sigma \text{ for all infinite } \sigma \neq \sigma_1 \label{eq:infiniteLocalInequality} \\
   |\alpha - \alpha'_\fp|_\fp &\le \eps_\fp \text{ for all } \fp \in P \label{eq:finiteLocalInequality} \\
   |\alpha - \alpha'_{\fp_0}|_{\fp_0} &\le \eps_{\fp_0}. \label{eq:spinorExceptionInequality}
  \end{align}
  The first two conditions ensure local representability at infinite and finite places, respectively, while the last condition avoids spinor exceptions.
 \item[Strong approximation.] There exists $C_{\sigma_1} > 0$ independent of $I$ such that, for any $I$ and $\beta$, there exist $\alpha \in R$ satisfying the conditions above and additionally
  \begin{align} \label{eq:givenInfiniteInequality}
   |\alpha| < C_{\sigma_1}\prod_{\sigma \neq \sigma_1} (\sigma(\kappa\beta))^2.
  \end{align}
  This will follow from a quantitative version of (the classical) strong approximation theorem.
 \item[End game.] Choose $\beta \in I$ of small norm by the Minkowski bound, then use $\beta$ and $\alpha$ obtained above to produce $x \in \Gamma(I)$ with the desired trace bound.
\end{description}
We take each step in turn.

\textbf{Representability.}
Let $\pi$ be the uniformizer of $R_{\fp_0}$. Choose $\eps_{\fp_0} > 0$ such that \eqref{eq:spinorExceptionInequality} implies
\begin{align} \label{eq:spinorExceptionCongruence}
 \alpha \equiv \alpha'_{\fp_0} \bmod \pi^2.
\end{align}

For $\fp \in P$, we will show that $aY_1^2$ alone represents $\alpha^2(\kappa\beta)^2 + 2\alpha$. Taking some $x_\fp \in R_\fp$ as the approximate root in Hensel's lemma, we will need $ax_\fp^2$ to be close $\fp$-adically close to $\alpha^2(\kappa \beta)^2 + 2\alpha$. Given $I$ and $\beta$, we will choose $\alpha'_\fp \in R_\fp$ to be a root of the quadratic $\alpha^2(\kappa \beta)^2 + 2\alpha - ax_\fp^2$ in $\alpha$, so
\[
 \alpha'_\fp = \frac{-1 \pm \sqrt{1 + (\kappa \beta)^2 ax_\fp^2}}{(\kappa \beta)^2}.
\]
Since the allowed error $\eps_\fp$ depends on $|x_\fp|_\fp$, we will choose $x_\fp \in R_\fp$ independent of $I$ and so that the above choice of $\alpha'_\fp$ is possible for any $I$ and $\beta$.

Let $\fp \in P$. Choose $x_\fp \in R_\fp$ with $|x_\fp|_\fp$ small enough that
\begin{align} \label{eq:valp}
 |a x_\fp^2|_\fp < |2|_\fp^2,
\end{align}
and take $\eps_\fp \in |K_\fp^\times|_\fp$ satisfying
\begin{align} \label{eq:epsp}
 \eps_\fp < |2ax_\fp|_\fp^2.
\end{align}
Given $I$ and $\beta$, for each $\fp \in P$, we choose $\alpha'_\fp$ as follows. Consider
\[
 f(X) = X^2 - (1 + (\kappa \beta)^2ax_\fp^2) \in R_\fp[X], \quad f'(X) = 2X.
\]
Then \eqref{eq:valp} implies $|f(1)|_\fp \le |ax_\fp^2|_\fp < |f'(1)|_\fp^2$, so by Hensel's lemma, there exists a unique $s_\fp \in R_\fp$ satisfying
\[
 s_\fp^2 = 1 + (\kappa\beta)^2ax_\fp^2, \quad |1 - s_\fp|_\fp < |2|_\fp.
\]
Since $-s_\fp$ is the other root of $f$, uniqueness implies $|1 + s_\fp|_\fp \ge |2|_\fp \ge |2|_\fp^2$. Then again by \eqref{eq:valp}, $|1 + s_\fp|_\fp > |ax_\fp^2|_\fp$, so
\[
 \frac{1 - s_\fp}{(\kappa\beta)^2} = \frac{1 - s_\fp^2}{(\kappa\beta)^2(1 + s_\fp)} = \frac{-ax_\fp^2}{1 + s_\fp} \in R_\fp.
\]
Taking this to be $\alpha'_\fp$, we obtain
\begin{align} \label{eq:equality}
 {\alpha'_\fp}^2(\kappa\beta)^2 + 2\alpha'_\fp = ax_\fp^2.
\end{align}

Now suppose $\alpha \in R$ satisfies \eqref{eq:infiniteLocalInequality} through \eqref{eq:spinorExceptionInequality} for these $\alpha'_\fp$. We show that $aY_1^2 + bY_2^2 - abY_3^2$ represents $\alpha^2(\kappa\beta)^2 + 2\alpha$ over $R$. First, \eqref{eq:spinorExceptionInequality} implies that $\alpha^2(\kappa\beta)^2 + 2\alpha$ is not a spinor exception. Indeed, since $\alpha_0 \in \fp_0 \setminus \fp_0^2$, \eqref{eq:spinorExceptionCongruence} implies $\ord_{\fp_0}(\alpha) = 1$. Then since $2 \notin \fp_0$, $\alpha (\kappa\beta)^2 + 2 \notin \fp_0$, so $\ord_{\fp_0}(\alpha^2 (\kappa\beta)^2 + 2\alpha) = 1$. Thus $\fp_0$ appears in the square-free part of the prime ideal factorization of the principal ideal $(\alpha^2(\kappa\beta)^2 + 2\alpha)R$. Meanwhile, any spinor exception has the form $t_ix^2$ for some $x \in R$, and $\fp_0$ by its choice does not divide the square-free part of $t_ix^2R = (t_iR)(xR)^2$.

It therefore suffices to show everywhere local representability. For infinite places $\sigma$, we must show that $(\sigma a)Y_1^2 + (\sigma b)Y_2^2 - (\sigma a)(\sigma b)Y_3^2$ represents $(\sigma\alpha)^2(\sigma(\kappa\beta))^2 + 2(\sigma\alpha)$ over $\R$. For $\sigma = \sigma_1$, the form is indefinite, hence universal. Otherwise, $A \otimes_\sigma \R \cong (\frac{\sigma a, \sigma b}{\R})$ is not split, so $\sigma a, \sigma b < 0$. The form is therefore negative definite, but \eqref{eq:infiniteLocalInequality} implies
\begin{align*}
 -\frac{2}{\sigma(\kappa\beta)^2} < \sigma\alpha < 0 \Longrightarrow (\sigma\alpha)^2(\sigma(\kappa\beta))^2 + 2(\sigma\alpha) < 0.
\end{align*}

For finite places, by Lemma \ref{lemma:unitUniversal}, we only need to check representability at primes in $P$. Let $\fp \in P$, and consider
\[
 f(X) = aX^2 - (\alpha^2(\kappa\beta)^2 + 2\alpha) \in R_\fp[X], \quad f'(X) = 2aX.
\]
Then by \eqref{eq:equality},
\begin{align*}
 |f(x_\fp)|_\fp &= |ax_\fp^2 - (\alpha^2(\kappa\beta)^2 + 2\alpha)|_\fp \\
                &= |({\alpha'_\fp}^2(\kappa\beta)^2 + 2\alpha'_\fp) - (\alpha^2(\kappa\beta)^2 + 2\alpha)|_\fp \\
                &\le |\alpha'_\fp - \alpha|_\fp.
\end{align*}
By \eqref{eq:finiteLocalInequality} and \eqref{eq:epsp}, this is less than $|2ax_\fp|_\fp^2 = |f'(x_\fp)|_\fp^2$. Hence by Hensel's lemma, already $aY_1^2$ represents $\alpha^2(\kappa\beta)^2 + 2\alpha$ over $R_\fp$.

\textbf{Strong approximation.} Define $\alpha' = (\alpha'_\nu) \in K_\cA$ by
\[
 \alpha'_\nu = 
 \begin{cases}
  -\frac{1}{\sigma(\kappa\beta)^2} &\mbox{ if } \nu = \sigma \neq \sigma_1 \\
  \alpha'_\fp                      &\mbox{ if } \nu = \fp \in P \cup \{\fp_0\} \\
  0                                &\mbox{ otherwise.}
 \end{cases}
\]
The existence of $\alpha \in R$ satisfying \eqref{eq:infiniteLocalInequality} through \eqref{eq:spinorExceptionInequality} follows from the classical strong approximation theorem, which asserts the density of $K$ in the restricted ad\`{e}les excluding $\sigma_1$. To obtain \eqref{eq:givenInfiniteInequality}, we additionally bound $|\alpha - \alpha'|_{\sigma_1} = |\alpha|$ by quantifying strong approximation.

Let $W \subset K_\cA$ and $\delta_\nu > 0$ be as in Lemma \ref{lemma:cassels1}; we may as well take $\delta_\nu \in |K_\nu^\times|_\nu$. Define $\lambda_\nu > 0$ for each place $\nu$ of $K$, as follows. For $\nu \neq \sigma_1$, let
\[
 \lambda_\nu = 
 \begin{cases}
  \frac12\frac{1}{(\sigma\beta)^2}\delta_\sigma^{-1} &\mbox{if } \nu = \sigma \neq \sigma_1 \\
  \eps_\fp \delta_\fp^{-1}                           &\mbox{if } \nu = \fp \in P \cup \{ \fp_0 \} \\
  \delta_\nu^{-1}                                    &\mbox{if } \nu \mbox{ finite and not in } P \cup \{ \fp_0 \}.
 \end{cases}
\]
Since $\eps_\fp, \delta_\nu$ are independent of $I$, there exists $C_{\sigma_1} > 0$ independent of $I$ such that, letting
\[
 \lambda_{\sigma_1} = C_{\sigma_1} \prod_{\sigma \neq \sigma_1}(\sigma\beta)^2\delta_{\sigma_1}^{-1},
\]
we have $\prod_\nu \lambda_\nu > C_K$ for $C_K > 0$ as in Lemma \ref{lemma:cassels2}. Since $\delta_\nu \in |K_\nu^\times|_\nu$ and $\eps_\fp \in |K_\fp^\times|_\fp$, for all $\nu$ there exists $\zeta_\nu \in K_\nu$ with $|\zeta_\nu|_\nu = \lambda_\nu$. Then $\zeta = (\zeta_\nu) \in K_\cA$ since $\lambda_\nu = \delta_\nu^{-1} = 1$ for almost all $\nu$, and $\prod_\nu|\zeta_\nu|_\nu > C_K$, so by Lemma \ref{lemma:cassels2} there exists $\lambda \in K^\times$ such that
\[
 |\lambda|_\nu \le |\zeta_\nu|_\nu = \lambda_\nu \mbox{ for all } \nu.
\]
Apply Lemma \ref{lemma:cassels1} to $\lambda^{-1}\alpha'$, then multiply by $\lambda$, so that $\alpha' = \lambda \xi + \alpha$ for some $\xi \in W$, $\alpha \in K$. Then at the $\nu$ component,
\[
 |\alpha - \alpha'_\nu|_\nu = |\lambda|_\nu|\xi_\nu|_\nu \le \lambda_\nu \delta_\nu.
\]
Taking $\nu$ to be each place of interest, we conclude that $\alpha$ satisfies \eqref{eq:infiniteLocalInequality} through \eqref{eq:givenInfiniteInequality}, and then in fact $\alpha \in R$ as above. Note that
\[
 |\alpha||\kappa\beta|^2 \le C_{\sigma_1}\prod_\sigma \sigma(\kappa\beta)^2 = C_{\sigma_1}\Nm(\kappa)^2\Nm(\beta)^2.
\]

\textbf{End game.} Recall that Minkowski's convex body theorem gives the following: there exists $B_K > 0$ depending only on $K$ such that every ideal $I$ of $R$ contains some nonzero element $\beta$ with $\Nm(\beta) \le B_K \Nm(I)$. Given $I$, choose such a $\beta$. By the above, there exists nonzero $\alpha \in R$ satisfying \eqref{eq:givenInfiniteInequality} and $(y_1, y_2, y_3) \in R^3$ satisfying
\begin{align*}
 ay_1^2 + by_2^2 - aby_3^2 &= \alpha^2(\kappa \beta)^2 + 2\alpha \\
\Leftrightarrow (\alpha (\kappa \beta)^2 + 1)^2 - a(\kappa \beta y_1)^2 - b(\kappa \beta y_2)^2 + ab(\kappa \beta y_3)^2 &= 1.
\end{align*}
Let
\[
 x = (\alpha (\kappa \beta)^2 + 1) + (\kappa \beta y_1)i + (\kappa \beta y_2)j + (\kappa \beta y_3)k,
\]
so $\Nm_A(x) = 1$. Also, $x' := \alpha\kappa^2\beta + (\kappa y_1)i + (\kappa y_2)j + (\kappa y_3)k \in R + \kappa Ri + \kappa Rj + \kappa Rk \subset \cO$, so $x = \beta x' + 1 \equiv 1 \bmod I\cO$. Hence $x \in \Gamma(I)$ and $x \neq \pm 1$ with
\[
 2|\alpha (\kappa \beta)^2 + 1| < 4|\alpha||\kappa\beta|^2 \le 4C_{\sigma_1}\Nm(\kappa)^2\Nm(\beta)^2 \le 4C_{\sigma_1}\Nm(\kappa)^2B_K^2\Nm(I)^2. \qedhere
\]
\end{proof}

\section{Graphs of Large Girth} \label{sec:Girth}
As we alluded to earlier, the large systole problem has an analogue for regular graphs. This final section briefly compares the problems in these two settings. For more details, see for example \cite{MR2532876} and references and therein.

The \emph{girth} of an undirected graph $X$, denoted $\girth(X)$, is the length of the shortest cycles (closed path without backtracking). This is the analogue of systole and noncontractible loops. For Riemann surfaces, recall that genus was area, up to a constant factor irrelevant for the systole problem. The analogous notion for a graph $X$ is simply the number of vertices, denoted $|X|$. Thus the following definitions are analogous to those for Riemann surfaces.
\begin{defn}
 A family of finite $k$-regular graphs $X_i$ with $|X_i| \rightarrow \infty$ is \emph{of large girth} if there exists constants $\gamma > 0$ and $c$, independent of $i$, such that
 \[
  \girth(X_i) \ge \gamma\log|X_i| - c.
 \]
\end{defn}
\begin{problem}[The large girth problem (for $k$-regular graphs)] 
Determine the supremum of $\gamma$ such that there exists a family of $k$-regular graphs of large girth with this $\gamma$. 
\end{problem}
As with systoles, it is easy to prove the upper bound $\gamma \le 2$. Moreover, the current best known constant of $\gamma = \frac{4}{3}$ was first achieved by an arithmetic construction, also using quaternions, by Lubotzky, Phillips, and Sarnak \cite{MR963118}. The truth of the girth problem is thus also known to lie within $\frac{4}{3} \le \gamma \le 2$.

The compact Riemann surfaces of the Buser-Sarnak construction were quotients of the upper half plane by principal congruence subgroups of a Fuchsian derived from a quaternion algebra. Similarly, the Ramanujan graphs of \cite{MR963118} are obtained by starting with an infinite tree associated to the integral Hamiltonian quaternion algebras, then quotienting at principal congruence subgroups to obtain finite graphs, identified with Cayley graph of $\PGL_2(\Z/q\Z)$. Generalizations of this construction have also achieved $\gamma = \frac{4}{3}$ for regular graphs of various degrees. A natural generalization using octonions was considered in \cite{DT}. However, due to an error found in the paper, the construction does not yield the claimed $\gamma > \frac{4}{3}$. Unlike in the case of Riemann surfaces, Gamburd et al. showed in \cite{MR2532876} that random Cayley graphs of certain families of groups are of large girth, achieving $\gamma = 1$ in the best case.

Biggs and Boshier in \cite{MR1064675} showed that $\gamma = \frac{4}{3}$ is the best possible value of $\gamma$ for the Ramanujan graphs of \cite{MR963118}. They obtain an upper bound on girth using, as we did, representation by a ternary quadratic form. The form obtained from the Hamiltonian quaternion algebra $(\frac{-1, -1}{\Q})$ is the sum of three squares. For positive definite forms, spinor genus theory no longer applies, so local-global theory is more difficult than for indefinite forms. However, this particular form is known to lie in a genus consisting of a single class, and Biggs and Boshier directly used the consequent Legendre's theorem on sums of three squares. For constructions based on more general quaternions, we must appeal to the recent results on the equidistribution of global solutions across genera for representation by positive definite ternary forms, first obtained by Cogdell, Piatetski-Shapiro, and Sarnak \cite{CPSS}. For the latest treatment with an improved constant, see Blomer and 
Harcos \cite{MR2647133}.

\bibliographystyle{alpha}
\bibliography{refs.bib}

\end{document}